\newtheorem{thm}{Theorem}[section]
\newtheorem{cor}[thm]{Corollary}
\newcommand{\thmref}[1]{Theorem~\ref{#1}}
\newcommand{\corref}[1]{Corollary~\ref{#1}}
\theoremstyle{remark}
\newtheorem{rmk}{Remark}[section]
\begin{document}

\title[Representation of certain quadratic forms]
{On the number of representations of certain quadratic forms and a formula for the Ramanujan Tau function}
 
\author{B. Ramakrishnan, Brundaban Sahu and Anup Kumar Singh}
\address[B. Ramakrishnan and Anup Kumar Singh]{Harish-Chandra Research Institute, HBNI,  
       Chhatnag Road, Jhunsi,
     Allahabad -     211 019,
   India.}
\address[Brundaban Sahu]
{School of Mathematical Sciences, National Institute of Science 
Education and Research, Bhubaneswar, HBNI, 
Via- Jatni, Khurda, Odisha 752 050  
India.}

\email[B. Ramakrishnan]{ramki@hri.res.in}
\email[Brundaban Sahu]{brundaban.sahu@niser.ac.in}
\email[Anup Kumar Singh]{anupsingh@hri.res.in}

\subjclass[2010]{Primary 11E25, 11F11; Secondary 11E20}
\keywords{representation numbers of quadratic forms, modular forms of one variable, Ramanujan Tau function}
\dedicatory{Dedicated to Srinivasa Ramanujan on the occasion of his 129th 
birth anniversary}

\date{\today}

\begin{abstract}

In this paper, we find the number of representations of the quadratic form $x_1^2+ x_1x_2 + x_2^2 + x_3^2+ x_3x_4 + x_4^2 +
\ldots + x_{2k-1}^2 + x_{2k-1}x_{2k} + x_{2k}^2,$  for  $k=7,9,11,12,14$ 
using the theory of modular forms.  By comparing our formulas with the formulas obtained by G. A. Lomadze, we obtain the Fourier coefficients of certain newforms of level $3$ 
and weights $7,9,11$ in terms of certain finite sums involving the solutions of similar quadratic forms of lower variables. In the case of $24$ variables, comparison of these 
formulas gives rise to a new formula for the Ramanujan tau function. 
\end{abstract}


\maketitle

\section{Introduction}

For a positive integer $k$, let $F_k$ denote the quadratic form in $2k$ variables defined by 
\begin{equation}\label{quad-form}
F_k(x_1, x_2, \ldots,x_{2k}) =  \sum_{j=1}^k x_{2j-1}^2 + x_{2j-1} x_{2j} + x_{2j}^2,
\end{equation}
and we denote by 
\begin{equation}\label{s2k}
s_{2k}(n) = {\rm card}\left\{(x_1, x_2, \cdots, x_{2k}) \in {\mathbb Z}^{2k} : F_k(x_1, x_2, \cdots, x_{2k}) = n \right\},
\end{equation}
the number of representations of a positive integer $n$ by the quadratic form $F_k$. Finding explicit formulas for $s_{2k}(n)$ is a classical 
problem. For $k=2,4,6,8, 10, 12$ formulas for $s_{2k}$ are known due to the works of J. Liouville \cite{liouville},  J. G. Huard et. al. \cite{huard},  
O. X. M. Yao and E. X. W. Xia \cite{xia} and the first two authors \cite{{ramki1}, {ramki2}}. Most of these works make use of the convolution sums 
of the divisor functions to evaluate the formulas. In \cite{lomadze}, G. A.  Lomadze gave formulas for $s_{2k}(n)$ for 
$2\le k\le 17$. These formulas were given in terms of divisor functions and certain finite sums involving polynomials in $x_1$ ($F_l(x_1, x_2, \ldots, x_{2l}) =n$) with integer coefficients (including $n$) for certain values of $l<k$. 
However, the other formulas mentioned above are in terms of the divisor functions and Fourier coefficients of certain cusp forms.  So, 
it is natural to compare these formulas with the formulas given by Lomadze and this gives rise to certain interesting formulas for the Fourier coefficients of 
certain cuspidal newforms of integral weight. See for example \cite[Theorem 1.3]{xia}, \cite[Theorem 2.3]{ramki1}, \cite[Corollary 2.5]{ramki2}. 

The purpose of this paper is to consider the cases $k= 7,9,11,12,14$ and find the formulas for $s_{2k}(n)$ in these cases (\thmref{odd}, \thmref{s28}). We make use of the 
theory of modular forms directly instead of using the convolution sums method. When comparing our formulas with the formulas given by Lomadze, we 
conclude that the finite sums appearing in the formulas (VI), (VIII) and (X) of Lomadze \cite[p.19]{lomadze} correspond to the $n$-th Fourier coefficients 
(up to some constant) of the newforms of weights $7,9,11$ with level $3$ and character $\chi_{-3} = \left(\frac{-3}{\cdot}\right)$. 

In an earlier work of the first two authors \cite{ramki2}, the case of $24$ variables was treated and as a consequence they obtained a formula involving the Ramanujan 
function $\tau(n)$ and the Fourier coefficients of the newform of weight $12$ on $\Gamma_0(3)$.  In the present work, we make use of a different basis for the modular forms 
space of weight $12$ on $\Gamma_0(3)$ which results in a different formula for $s_{24}(n)$ (see \eqref{24}). The advantage of considering this basis is that while comparing the new 
formula of $s_{24}(n)$ with Lomadze's formula, we get a new identity for $\tau(n)$ in terms of finite sums that appear in Lomadze's formulas (\corref{tau}). In the case of 
$28$ variables, comparison of the formula for $s_{28}(n)$ obtained in \eqref{28} with Lomadze's formula leads to a relation between these finite sums (coming from Lomadze's formulas) and certain convolution sums of the divisor functions (\corref{conv}).  
In principle, one can adopt our method to get formulas for $s_{2k}(n)$ for other higher values of $k$ and get similar identities between the Fourier 
coefficients of newforms of the corresponding integral weight and the sums appearing in Lomadze's formulas. Our aim in this paper is to fill up the 
incomplete cases (corresponding to the odd weight) and also to get a new expression for the Ramanujan tau function.



\section{Preliminaries and Statement of the Results}

As mentioned in the introduction, we shall be using the theory of modular forms to prove our results and so we first fix our notations and present 
some of the basic facts on modular forms. 
For positive  integers $k, N \ge 1$ and a Dirichlet character $\chi$ modulo $N$ with $\chi(-1) = (-1)^k$, let  
$M_k(N, \chi)$ denote the ${\mathbb C}$- vector space of holomorphic modular forms of weight $k$ for the congruence subgroup $\Gamma_0(N)$, with character $\chi$. Let us denote by $S_k(N, \chi)$,  the subspace of cusp forms in $M_k(N, \chi)$. When $\chi$ is the principal character modulo $N$, then we 
drop the symbol $\chi$ in the notation and write only $M_k(N)$ or $S_k(N)$.  The modular forms space is decomposed into the space of 
Eisenstein series (denoted by ${\mathcal E}_k(N, \chi)$) and the space of cusp forms $S_k(N, \chi)$ and one has 
$$
M_k(N, \chi) =  {\mathcal E}_k(N, \chi) \oplus S_k(N, \chi).
$$
One can get an explicit basis of the space ${\mathcal E}_k(N, \chi)$ using the following construction. For details we refer to \cite{{miyake}, {stein}}. 
Suppose that $\chi$ and $\psi$ are primitive Dirichlet characters with conductors $N$ and $M$, respectively. For a positive integer $k>2$, let 
\begin{equation}\label{eisenstein}
E_{k,\chi,\psi}(z) :=  c_0 + \sum_{n\ge 1}\left(\sum_{d\vert n} \psi(d) \cdot \chi(n/d) d^{k-1}\right) q^n,
\end{equation}
where $q=e^{2 i\pi z} ~(z\in {\mathcal H})$,  
$$
c_0 = \begin{cases}
0 &{\rm ~if~} N>1,\\
- \frac{B_{k,\psi}}{2k} & {\rm ~if~} N=1,
\end{cases}
$$
and $B_{k,\psi}$ denotes generalized Bernoulli number with respect to the character $\psi$. 
Then, the Eisenstein series $E_{k,\chi,\psi}(z)$ belongs to the space $M_k(NM, \chi/\psi)$, provided $\chi(-1)\psi(-1) = (-1)^k$ 
and $NM\not=1$. We give a notation to the inner sum in \eqref{eisenstein}:
\begin{equation}\label{divisor}
\sigma_{k-1;\chi,\psi}(n) := \sum_{d\vert n} \psi(d) \cdot \chi(n/d) d^{k-1}.
\end{equation}
When $\chi=\psi =1$ (i.e., when $N=M=1$) and $k\ge 4$, we have $E_{k,\chi,\psi}(z) = - \frac{B_k}{2k} E_k(z)$, where $E_k$ is the normalized Eisenstein series of weight $k$ in the space $M_k(1)$, defined  by 
$$
E_k(z) = 1 - \frac{2k}{B_k}\sum_{n\ge 1} \sigma_{k-1}(n) q^n. 
$$
In the above $\sigma_r(n)$ is the sum of the $r$th powers of the positive divisors of $n$ and $B_k$ is the $k$-th Bernoulli number defined by $\displaystyle{\frac{x}{e^x-1} = \sum_{m=0}^\infty \frac{B_m}{m!} x^m}$.
We also need the Eisenstein series of weight $2$, which is a  quasimodular form of weight $2$, depth $1$ on $SL_2({\mathbb Z})$ and is given by 
$$
E_2(z) = 1 -24\sum_{n\ge 1} \sigma(n) q^n. 
$$
(Here $\sigma(n) = \sigma_1(n)$.) Let $\Delta(z) = \sum_{n\ge 1} \tau(n) q^n $ be the well-known unique normalized cusp form of weight $12$, level $1$, studied by Ramanujan \cite{ramanujan}. It is known that $\Delta(z) = \eta^{24}(z)$, where $\eta(z)$ is the Dedekind eta function given by 
$$
\eta(z)=q^{1/24} \prod_{n\ge1}(1-q^n).
$$
In the case of the space of cusp forms $S_k(N,\chi)$ we use a basis consisting of newforms of level $N$ and oldforms generated by the newforms of lower level $d$, $d\vert N$, $\chi$ modulo $d$, $d\not =N$. However, in the case of $k=12$, we use a different basis for $S_{12}(3)$ (see \S 3.2). 
For basic theory of newforms we refer to \cite{{a-l},{li}} and for details on modular forms and quasimodular forms, we refer to \cite{{1-2-3}, {koblitz},  {miyake}, {stein}}. 

\smallskip

We now state the main results of this paper. 

Let $s_{2k}(n)$ be as defined in \eqref{s2k}. Then we have the following formulas for $s_{2k}(n)$ for $k=7,9,11$.

\begin{thm}\label{odd}
For a positive integer $n$, we have 
\begin{eqnarray}
s_{14}(n) & = & \frac{3}{7} \rho_6^*(n) + \frac{216}{7} \tau_{7,3,\chi_{-3}}(n), \label{s14}\\
s_{18}(n) & = & \frac{27}{809} \rho_8^*(n) + \frac{24 \times 1728}{809}\left( 27 
\tau_{9,3,\chi_{-3};1}(n) + \tau_{9,3,\chi_{-3};2}(n)\right), \label{s18} \\
s_{22}(n) & = & \frac{3}{1847} \rho_{10}^*(n) + \frac{81\times 748}{9235} \left( 
\tau_{11,3,\chi_{-3};1}(n) + 9 \tau_{11,3,\chi_{-3};2}(n) \right), \label{s22}
\end{eqnarray}
where $\rho_\ell^*(n)$ (for positive even integers $\ell$) is defined by 
\begin{equation*}
\rho_\ell^*(n) = 3^{\ell/2} \sum_{d\vert n} \left(\left(\frac{n/d}{3}\right) + (-1)^{\ell/2} \left(\frac{d}{3}\right)\right) d^\ell,
\end{equation*}
$\tau_{7,3,\chi_{-3}}(n)$ is the $n$th Fourier coefficient of the normalized newform of weight $7$, level $3$ with character $\chi_{-3}$ and for $k=9,11$, 
$\tau_{k,3,\chi_{-3};j}(n)$ ($j=1,2$) are the $n$-th Fourier coefficients of basis elements of the space $S_k(3,\chi_{-3})$. (These are defined explicitly in \S 3.1.)
\end{thm}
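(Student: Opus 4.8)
The plan is to realize the generating function $\sum_{n\ge 0} s_{2k}(n) q^n$ as a modular form and then decompose it in an explicit basis. First I would observe that the theta series attached to the binary form $x^2 + xy + y^2$, namely $\Theta(z) = \sum_{x,y\in\mathbb{Z}} q^{x^2+xy+y^2}$, is a modular form of weight $1$ on $\Gamma_0(3)$ with character $\chi_{-3}$; this is classical (it equals $1 + 6\sum_{n\ge 1}\left(\sum_{d\mid n}\chi_{-3}(d)\right)q^n$). Since $F_k$ is an orthogonal direct sum of $k$ copies of this binary form, the generating function for $s_{2k}(n)$ is exactly $\Theta(z)^k$, hence
\[
\sum_{n\ge 0} s_{2k}(n)\, q^n = \Theta(z)^k \in M_k\bigl(3, \chi_{-3}^k\bigr).
\]
For $k = 7, 9, 11$ the character $\chi_{-3}^k = \chi_{-3}$ and the weight is odd, so $\Theta^k \in M_k(3,\chi_{-3})$.

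Next I would pin down the structure of $M_k(3,\chi_{-3})$ for each $k\in\{7,9,11\}$ via the decomposition $M_k(3,\chi_{-3}) = \mathcal{E}_k(3,\chi_{-3}) \oplus S_k(3,\chi_{-3})$ recalled in the preliminaries. The Eisenstein subspace is two-dimensional, spanned by $E_{k,1,\chi_{-3}}(z)$ and $E_{k,\chi_{-3},1}(z)$ from \eqref{eisenstein}; a suitable linear combination of their $q$-expansions, using \eqref{divisor}, produces precisely the divisor-type quantity $\rho_\ell^*(n)$ with $\ell = k-1$. I would then compute $\dim S_k(3,\chi_{-3})$ (by the standard dimension formulas this is $1$ for $k=7$ and $2$ for $k=9,11$) and fix the basis: the normalized newform giving $\tau_{7,3,\chi_{-3}}(n)$ when $k=7$, and the explicit basis elements giving $\tau_{k,3,\chi_{-3};j}(n)$, $j=1,2$, when $k=9,11$, as promised in \S 3.1. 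Having chosen these bases, $\Theta^k$ is determined by finitely many of its Fourier coefficients, which I would read off directly from the product $\Theta^k$: matching the constant term and the coefficients of $q, q^2$ (and $q^3$ in the two-dimensional cuspidal cases) against the basis expansions yields a small linear system whose solution gives the rational constants $3/7, 216/7$, etc. appearing in \eqref{s14}–\eqref{s22}. Once the identity of $q$-expansions holds up to the Sturm bound for $M_k(3,\chi_{-3})$, it holds identically, and comparing coefficients of $q^n$ gives the stated formulas.

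The main obstacle is not conceptual but bookkeeping: one must verify that the particular linear combinations of Eisenstein series indeed collapse to the clean closed form $\rho_\ell^*(n)$ (this requires care with the two characters $1$ and $\chi_{-3}$ and with the factor $3^{\ell/2}$, i.e.\ the contribution at the cusp $0$ versus $\infty$), and one must honestly exhibit the cuspidal basis in \S 3.1 — in particular checking that the chosen elements of $S_k(3,\chi_{-3})$ are linearly independent and span, e.g.\ by realizing them as eta-quotients or as explicit combinations of products of lower-weight forms, and then confirming enough coefficients to nail down the coefficients uniquely. The only genuinely delicate point is getting the normalizations and denominators ($809$, $1847$, $9235$) exactly right; this is a finite check against the Sturm bound and presents no theoretical difficulty once the basis is fixed.
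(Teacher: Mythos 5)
Your proposal is correct and follows essentially the same route as the paper: identify $\sum_n s_{2k}(n)q^n$ with the $k$-th power of the weight-one theta series in $M_k(3,\chi_{-3})$, take the two Eisenstein series $E_{k,\mathbf{1},\chi_{-3}}$, $E_{k,\chi_{-3},\mathbf{1}}$ together with an explicit cuspidal basis (one newform for $k=7$, two forms for $k=9,11$), and determine the coefficients by matching $q$-expansions up to the Sturm bound. The only difference is that the paper exhibits the cuspidal basis elements concretely as eta-quotients and products with $E_4$, which is exactly the bookkeeping step you flag as remaining.
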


As mentioned in the introduction, by comparing the corresponding formulas obtained by 
Lomadze \cite{lomadze} with our formulas of the above theorem, as a direct consequence, we get the following corollary. 

\begin{cor}
The Fourier coefficients of the newforms of weights $7,9,11$ (of level $3$ with character $\chi_{-3}$) are given by the following sums, 
which involve the first coordinate ($x_1$) of the solutions to the quadratic forms $F_j(x_1,x_2,\cdots, x_{2j}) =n$, $j=3,5,7$ respectively. More precisely, we have 
\begin{equation}
\begin{split}
\tau_{7,3,\chi_{-3}}(n) & = \frac{1}{30} \sum_{F_3(x_1,\cdots,x_6)=n} (15 x_1^4 -12n x_1^2 + n^2),\\
(27 \tau_{9,3,\chi_{-3};1}(n) + \tau_{9,3,\chi_{-3};2}(n)) & =\frac{1}{168} \sum_{F_5(x_1,\cdots,x_{10})=n} (63 x_1^4 -36n x_1^2 + 2n^2),\\
(\tau_{11,3,\chi_{-3};1}(n) + 9 \tau_{11,3,\chi_{-3};2}(n)) & =\frac{5}{81} \sum_{F_7(x_1,\cdots,x_{14})=n} (54 x_1^4 -24 n x_1^2 + n^2).\\
\end{split}
\end{equation}
\end{cor}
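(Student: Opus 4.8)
The plan is to prove the corollary by matching two independent closed-form expressions for the same representation number $s_{2j}(n)$, one coming from \thmref{odd} and one from Lomadze's formulas (VI), (VIII), (X) in \cite[p.~19]{lomadze}. First I would record the relevant formula of Lomadze in each case: for $2k = 14$ (i.e.\ $j=7$), $2k=18$ ($j=9$), $2k=22$ ($j=11$), Lomadze expresses $s_{2k}(n)$ as a linear combination of a divisor-type term together with a finite sum of the shape $\sum_{F_j(x_1,\ldots,x_{2j})=n} P(x_1,n)$, where $P$ is an explicit degree-four polynomial in $x_1$ with coefficients that are polynomials in $n$ (the polynomials $15x_1^4-12nx_1^2+n^2$, etc., up to the normalizing constants in the statement). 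I would check that the divisor-type term in Lomadze's formula equals, up to an explicit rational constant, the Eisenstein contribution $\rho_\ell^*(n)$ appearing in \thmref{odd}; this is a routine comparison of two explicit Eisenstein series in $\mathcal E_k(3,\chi_{-3})$, which is one-dimensional in the relevant weights, so the identification is forced once one normalizes the first Fourier coefficient.

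Next, subtracting the Eisenstein parts, both \thmref{odd} and Lomadze give an expression for the \emph{cuspidal} part of the theta series $\sum_n s_{2k}(n)q^n$ attached to $F_k$. On the one side this cuspidal part is, by \thmref{odd}, an explicit multiple of $\tau_{7,3,\chi_{-3}}(n)$ (respectively of $27\tau_{9,3,\chi_{-3};1}(n)+\tau_{9,3,\chi_{-3};2}(n)$ and $\tau_{11,3,\chi_{-3};1}(n)+9\tau_{11,3,\chi_{-3};2}(n)$); on the other side it is the finite sum $\sum_{F_j=n}P(x_1,n)$. Equating the two and dividing by the common constant yields precisely the three identities claimed. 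Concretely, for the first identity one gets $\frac{216}{7}\tau_{7,3,\chi_{-3}}(n) = (\text{Lomadze's finite-sum coefficient})\cdot \sum_{F_3=n}(15x_1^4-12nx_1^2+n^2)$, and the constant $\frac{1}{30}$ in the corollary is exactly the ratio of these two normalizations; similarly for weights $9$ and $11$ with the constants $\frac{1}{168}$ and $\frac{5}{81}$.

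The only genuine subtlety — and the step I expect to be the main obstacle — is bookkeeping: one must transcribe Lomadze's formulas (VI), (VIII), (X) correctly, reconcile his normalization of the quadratic form and of the divisor sums with ours, and verify that the polynomial he attaches to the finite sum is, after clearing denominators, a scalar multiple of the polynomial written in the corollary. Since Lomadze's paper uses somewhat different conventions, this amounts to a careful but elementary matching of rational constants. No further modular-forms input is needed beyond \thmref{odd}: once the Eisenstein terms are identified and cancelled, the equality of the two cuspidal expressions is an equality of two elements of the same finite-dimensional space $S_k(3,\chi_{-3})$, so it suffices to check finitely many Fourier coefficients (in fact the identification of the normalized leading coefficient already does it in weight $7$, where $S_7(3,\chi_{-3})$ is one-dimensional, and in weights $9,11$ the combination occurring is again pinned down by \thmref{odd}). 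I would close by remarking that the same scheme, applied to $2k=24$ and $2k=28$, produces \corref{tau} and \corref{conv}, the only difference being which basis of the weight-$12$ space is used and whether the leftover term is cuspidal or a convolution of divisor functions.
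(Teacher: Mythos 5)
Your proposal is correct and is essentially the paper's own argument: the corollary is obtained exactly by comparing the formulas of \thmref{odd} with Lomadze's formulas (VI), (VIII), (X), cancelling the common divisor-sum (Eisenstein) contributions, and reading off the remaining identity between the cuspidal coefficient combinations and the finite sums, with the constants $\tfrac{1}{30}$, $\tfrac{1}{168}$, $\tfrac{5}{81}$ arising as the ratios of the two normalizations. (One minor inaccuracy: the Eisenstein subspace of $M_k(3,\chi_{-3})$ is two-dimensional, not one-dimensional, but this does not affect the argument since the divisor-type terms in the two formulas for $s_{2k}(n)$ agree outright and cancel.)
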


For the cases $k=12, 14$, we have the following theorem giving formulas for $s_{24}(n)$ and $s_{28}(n)$. As mentioned earlier, a different formula for $s_{24}(n)$ was 
given in \cite[Theorem 2.4]{ramki2}. 

\begin{thm}\label{s28}
For a positive integer $n$, we have 
\begin{equation}\label{24}
\begin{split}
s_{24}(n)&= \frac{6552}{73\times 691}\sigma_{11}^*(n) + \frac{29824}{691} \tau(n) +  \frac{240\times 1186848}{50443} \sum_{a,b\in 
{\mathbb N}_0\atop{a+b=n}} \sigma_3(a) \tau_{8,3}(b)\\
&\quad - \frac{504\times 261344}{50443} \sum_{a,b\in {\mathbb N}_0\atop{a+b=n}} \sigma_5(a) \tau_{6,3}(b), 
\end{split}
\end{equation}

\begin{equation}\label{28}
\begin{split}
s_{28}(n) & = \frac{12}{1093} \sigma_{13}^*(n) + \frac{107264}{1093} \tau(n) + \frac{107264 \times 12}{1093} \left(\sum_{a,b\in {\mathbb N}\atop{a+b=n}}\!\!\!\sigma(a) \tau(b) - 3 \sum_{a,b\in {\mathbb N}\atop {3a+b=n}}\!\!\!\sigma(a)\tau(b)\right)\\
& \quad + \frac{12448\times 504}{1093}\sum_{a,b\in {\mathbb N}_0 
\atop{a+b=n}} \!\!\!\sigma_5(a) \tau_{8,3}(b)
- \frac{3016\times 480}{1093} \sum_{a,b \in {\mathbb N}_0\atop{a+b=n}} \!\!\!\sigma_7(a) \tau_{6,3}(b),
\end{split}
\end{equation}
where $\sigma_{\ell}^*(n) = \sigma_{\ell}(n) + (-3)^{(\ell+1)/2} \sigma_{\ell}(n/3)$, ${\mathbb N}_0 = {\mathbb N}\cup \{0\}$, $\sigma_3(0) = 1/240$, 
$\sigma_5(0) = -1/504$, $\sigma_7(0) = 1/480$ and $\tau_{6,3}(0) = 0 = \tau_{8,3}(0)$. Also,  $\tau(n)$ is the Ramanujan tau function as defined before, and for $k=6,8$, $\tau_{k,3}(n)$ is the $n$th Fourier coefficient of $\Delta_{k,3}(z)$, the normalized newform of weight $k$, level $3$ with trivial character. 
\end{thm}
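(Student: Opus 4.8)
\textbf{Proof proposal for \thmref{s28}.}

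The plan is to identify the relevant theta series with an explicit element of a space of modular forms and then read off the Fourier coefficients by expanding it in a convenient basis. Recall that the theta series attached to the quadratic form $x^2+xy+y^2$ is
\[
\theta(z) = \sum_{x,y\in{\mathbb Z}} q^{x^2+xy+y^2},
\]
which is a modular form of weight $1$ on $\Gamma_0(3)$ with character $\chi_{-3}$ (it equals, up to normalization, the weight-one Eisenstein series $E_{1,1,\chi_{-3}}$). Since $F_k$ is the orthogonal direct sum of $k$ copies of $x^2+xy+y^2$, the generating function $\sum_{n\ge 0} s_{2k}(n)q^n$ is precisely $\theta(z)^k$. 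For $k=12$ this lies in $M_{12}(3,\chi_{-3}^{12}) = M_{12}(3)$, and for $k=14$ it lies in $M_{14}(3)$. So the first step is: \emph{establish that $\theta^{12}\in M_{12}(3)$ and $\theta^{14}\in M_{14}(3)$}, which follows from the standard transformation law for theta series of positive definite quadratic forms (e.g.\ via Schoeneberg, or by noting $\theta = \eta(z)^3/\eta(3z)$-type eta-quotient identities), together with $\chi_{-3}^2 = 1$.

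The second step is to write down an explicit basis for each of these spaces. We have $\dim M_{12}(3) = 6$: the Eisenstein subspace ${\mathcal E}_{12}(3)$ is two-dimensional, spanned by $E_{12}(z)$ and $E_{12}(3z)$ (equivalently by something with $\sigma_{11}^*$-coefficients and one other combination), and $S_{12}(3)$ is four-dimensional. As announced in \S 3.2, we do \emph{not} use the newform/oldform basis for $S_{12}(3)$; instead we use the basis $\{\Delta(z), \Delta(3z), E_6(z)\Delta_{6,3}(z)\text{-type products}\}$ — concretely, the products $\sigma_5 * \tau_{6,3}$ and $\sigma_3 * \tau_{8,3}$ arise because $E_6(z)\Delta_{6,3}(z)$ and $E_4(z)^{?}\Delta_{8,3}(z)$ have weight $12$ and level $3$; one checks these together with $\Delta(z)$ and $\Delta(3z)$ are linearly independent, hence a basis of $S_{12}(3)$. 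Similarly $\dim M_{14}(3) = 6$ (Eisenstein part two-dimensional, cusp part four-dimensional), and for $S_{14}(3)$ we take the basis $\{\Delta(z)E_2^{*}(z)\text{-combinations}, \;\Delta(3z)\text{-related}, \; E_6\Delta_{8,3},\; E_8\Delta_{6,3}\}$ so that the convolution sums $\sigma*\tau$, $\sigma_5*\tau_{8,3}$, $\sigma_7*\tau_{6,3}$ appear — note the term $\sum \sigma(a)\tau(b) - 3\sum_{3a+b=n}\sigma(a)\tau(b)$ is exactly the coefficient combination coming from $(E_2(z)-3E_2(3z))\Delta(z)$, which is a genuine holomorphic form of weight $14$ on $\Gamma_0(3)$ since the quasimodular anomalies of $E_2(z)$ and $3E_2(3z)$ cancel.

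The third step is purely linear-algebraic: expand $\theta^{12}$ (resp.\ $\theta^{14}$) as a $q$-series up to $O(q^6)$ — enough terms to exceed the dimension of the space — and solve the resulting linear system to express it in the chosen basis. This determines the rational coefficients $\tfrac{6552}{73\cdot 691}$, $\tfrac{29824}{691}$, etc. Comparing the constant term and first few coefficients with the known values $s_{24}(0)=s_{28}(0)=1$, $s_{24}(1)=48$, and so on, pins down every coefficient; then equality of the two sides as elements of a finite-dimensional vector space, having agreed on more coefficients than the dimension, gives the identity for all $n$. The normalizations $\sigma_3(0)=1/240$, $\sigma_5(0)=-1/504$, $\sigma_7(0)=1/480$, $\tau_{6,3}(0)=\tau_{8,3}(0)=0$ are exactly the constant terms making the convolution-sum notation agree with the product of $q$-expansions (the $\sigma_r(0)$ values being $-B_r/2r$ from the Eisenstein normalization), so the stated formulas hold including $n$ for which the ${\mathbb N}_0$ sums pick up boundary contributions.

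The main obstacle is \emph{not} the linear algebra but getting the basis of $S_{12}(3)$ and $S_{14}(3)$ explicitly right: one must verify that the proposed eta-quotient / Eisenstein-times-newform products genuinely lie in the correct space (holomorphy at all cusps, correct character, and in the weight-$14$ case the cancellation of the weight-$2$ quasimodular defect in $E_2(z)-3E_2(3z)$), and that they are linearly independent. Once a correct explicit basis is in hand, everything else is a finite, mechanical $q$-expansion computation that I would carry out with the Sturm bound for $\Gamma_0(3)$ guaranteeing that checking finitely many coefficients suffices.
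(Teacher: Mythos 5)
Your strategy is exactly the paper's: identify $\sum_n s_{2k}(n)q^n$ with $\theta^{k}$, note $\theta^{12}\in M_{12}(3)$ and $\theta^{14}\in M_{14}(3)$ since $\chi_{-3}^2$ is trivial, expand in an explicit basis built from $E_{12}(z),E_{12}(3z)$ (resp. $E_{14}(z),E_{14}(3z)$), $\Delta(z)$, the products $E_4\Delta_{8,3}$, $E_6\Delta_{6,3}$ (resp. $E_6\Delta_{8,3}$, $E_8\Delta_{6,3}$) and the holomorphic weight-$2$ combination of $E_2(z)$ and $3E_2(3z)$ times $\Delta(z)$, then determine the rational coefficients from finitely many $q$-coefficients with the Sturm bound guaranteeing the identity. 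This is the proof in \S 3.2, down to the same choice of cusp-form products that produce the convolution sums $\sigma_3*\tau_{8,3}$, $\sigma_5*\tau_{6,3}$, $\sigma_5*\tau_{8,3}$, $\sigma_7*\tau_{6,3}$ and $\sigma*\tau$.

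There is, however, one concrete error you should fix: the dimension counts. One has $\dim M_{12}(\Gamma_0(3))=\dim M_{14}(\Gamma_0(3))=5$, with cusp subspaces of dimension $3$ (for weight $12$: the two oldforms $\Delta(z),\Delta(3z)$ plus a single newform; for weight $14$: no oldforms), not $6$ and $4$ as you assert. Consequently your proposed cuspidal ``basis'' $\{\Delta(z),\Delta(3z),E_4\Delta_{8,3},E_6\Delta_{6,3}\}$ consists of four vectors in a three-dimensional space; the linear-independence check you defer to would fail, and the linear system in your third step would not determine the coefficients uniquely (so it would not by itself produce the specific constants in \eqref{24} and \eqref{28}). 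The repair is exactly what the paper does: drop $\Delta(3z)$ (and the analogous redundant element in weight $14$) and use the five-element bases $\{E_{12}(z),E_{12}(3z),\Delta(z),E_4\Delta_{8,3},E_6\Delta_{6,3}\}$ and $\{E_{14}(z),E_{14}(3z),E_8\Delta_{6,3},E_6\Delta_{8,3},\tfrac12\big(3E_2(3z)-E_2(z)\big)\Delta(z)\}$; with that correction the remaining steps (your handling of the constant terms $\sigma_3(0),\sigma_5(0),\sigma_7(0)$ and of the quasimodular cancellation in $3E_2(3z)-E_2(z)$ is right) go through and reproduce the stated formulas.
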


\noindent {\bf Note:}  All the cusp forms mentioned in \thmref{odd} and \thmref{s28} are defined in sections 3.1 and 3.2 respectively.  

As a consequence to our formula for $s_{24}(n)$ given by \eqref{24}, we have the  following formula for the Ramanujan tau function: 
\begin{cor}\label{tau}
For an integer $n\ge 1$, we have 
\begin{equation}\label{tau-eq}
\begin{split}
\tau(n) & = \frac{1}{73\times 3728}\Big[\frac{36387}{35} L_{12;8}(n) + 108 L_{12;6}(n) + \frac{1}{3} {\mathcal L}_4(n)  - \frac{32668}{12} L_{6;2}(n)\\
& \quad  - 329680 \sum_{a,b \in {\mathbb N}\atop{ a+b=n}} \sigma_3(a) L_{8;4}(b) + 1372056 \sum_{a,b \in {\mathbb N}\atop{ a+b=n}} \sigma_5(a) L_{6;2}(b)\Big], \\
\end{split}
\end{equation}
where 
\begin{eqnarray}
L_{12;8}(n) &= & \sum_{x_1\in {\mathbb Z}\atop{F_{8}(x_1,\ldots,x_{16})=n}} 135 x_1^4 - 54 n x_1^2 + 2 n^2, \label{12-8}\\
L_{12;6}(n) & = & \sum_{x_1\in {\mathbb Z}\atop{F_{6}(x_1,\ldots,x_{12})=n}} 162 x_1^6 - 162 n x_1^4 + 36 n^2 x_1^2- n^3,\label{12-6}\\
L_{8;4}(n) &= & \sum_{x_1\in {\mathbb Z}\atop{F_{4}(x_1,\ldots,x_{8})=n}} 45 x_1^4 - 30 n x_1^2 + 2 n^2,  \\
L_{6;2}(n) & = & \sum_{x_1\in {\mathbb Z}\atop{F_{2}(x_1,\ldots,x_{4})=n}} 9 x_1^4 - 9 n x_1^2 + n^2,\\
{\mathcal L}_4(n) & = &  \sum_{x_1\in {\mathbb Z}\atop{F_{4}(x_1,\ldots,x_{8})=n}} 
\!\!\!\!\! \big(164025  x_1^8 - 306180 n x_1^6 + 45(3780 n^2 -4121) x_1^4 \nonumber \\
& &\hskip 3cm  -  30(945 n^2 - 4121) n x_1^2 + 675 n^4 - 8242 n^2\big).
\end{eqnarray}
\end{cor}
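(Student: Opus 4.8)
The plan is to derive \eqref{tau-eq} by comparing the modular-forms formula \eqref{24} for $s_{24}(n)$ with Lomadze's formula for $s_{24}(n)$ from \cite{lomadze}. The first step is to dispose of the two convolution sums occurring in \eqref{24}. After the substitutions $n=F_2(x)$ and $n=F_4(x)$, the polynomials $9x_1^4-9nx_1^2+n^2$ and $45x_1^4-30nx_1^2+2n^2$ appearing in $L_{6;2}(n)$ and $L_{8;4}(n)$ are, up to scalars, the unique polynomials of the shape $ax_1^4+bQx_1^2+cQ^2$ harmonic with respect to $F_2$ (resp.\ $F_4$); hence $L_{6;2}(n)$ is the $n$th Fourier coefficient of a cusp form in $S_6(\Gamma_0(3))$ and $L_{8;4}(n)$ of one in $S_8(\Gamma_0(3))$. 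Each of these spaces is one-dimensional, spanned respectively by the newforms $\Delta_{6,3}$ and $\Delta_{8,3}$, so comparing Fourier coefficients at $n=1$, where a direct count gives $L_{6;2}(1)=12$ and $L_{8;4}(1)=108$, yields the exact identities $\tau_{6,3}(n)=\tfrac{1}{12}L_{6;2}(n)$ and $\tau_{8,3}(n)=\tfrac{1}{108}L_{8;4}(n)$.

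Substituting these into \eqref{24} replaces $\tau_{8,3}(b)$ and $\tau_{6,3}(b)$ by $\tfrac{1}{108}L_{8;4}(b)$ and $\tfrac{1}{12}L_{6;2}(b)$. Splitting off from each $\mathbb{N}_0$-convolution the terms with $a=0$ or $b=0$ (using $\sigma_3(0)=\tfrac{1}{240}$, $\sigma_5(0)=-\tfrac{1}{504}$ and $\tau_{6,3}(0)=\tau_{8,3}(0)=L_{6;2}(0)=L_{8;4}(0)=0$) then rewrites \eqref{24} as an expression for $s_{24}(n)$ in terms of $\sigma_{11}^*(n)$, $\tau(n)$, the single sums $L_{6;2}(n)$ and $L_{8;4}(n)$, and the two $\mathbb{N}$-convolutions $\sum_{a+b=n}\sigma_3(a)L_{8;4}(b)$ and $\sum_{a+b=n}\sigma_5(a)L_{6;2}(b)$.

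Next I would substitute Lomadze's formula for $s_{24}(n)$ on the left-hand side. Because the Eisenstein component of the theta series of a positive definite lattice is the canonically determined genus Eisenstein series, the purely Eisenstein contributions to the two expressions coincide and cancel upon subtraction (in \eqref{24} this contribution is exactly $\tfrac{6552}{73\times 691}\sigma_{11}^*(n)$, since $\tau(n)$ and both convolution sums are coefficients of cusp forms in $S_{12}(\Gamma_0(3))$); should any convolution of two divisor functions remain in Lomadze's formula, it is reduced to divisor sums and to $\tau,\tau_{6,3},\tau_{8,3}$ by the known evaluations of level $3$ convolution sums. What is left is a single linear relation in $S_{12}(\Gamma_0(3))$ among $\tau(n)$, Lomadze's cuspidal sums $L_{12;8}(n)$, $L_{12;6}(n)$, $\mathcal{L}_4(n)$, the single sums $L_{6;2}(n)$, $L_{8;4}(n)$, and the two $\mathbb{N}$-convolutions; solving it for $\tau(n)$ — i.e.\ dividing through by the coefficient $\tfrac{29824}{691}$ of $\Delta(z)$ in \eqref{24} and collecting everything else — yields \eqref{tau-eq}, with all six constants $\tfrac{36387}{35}$, $108$, $\tfrac{1}{3}$, $-\tfrac{32668}{12}$, $-329680$, $1372056$ forced by the computation.

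I expect the real difficulty to be purely one of bookkeeping: transcribing Lomadze's formula faithfully into the present normalization, and then carrying the many rational constants correctly through the substitutions. The delicate point is that the polynomial underlying $\mathcal{L}_4(n)$ is not weight-homogeneous, for after putting $n=F_4(x)$ it equals a harmonic degree $8$ polynomial minus $4121$ times the harmonic degree $4$ polynomial that also defines $L_{8;4}(n)$; thus $\mathcal{L}_4(n)$ carries a spurious weight $8$ component, and one must keep careful track of all the $L_{8;4}(n)$ contributions (from this component, from the $a=0$ term of $\sum_{a+b=n}\sigma_3(a)L_{8;4}(b)$, and from the balancing term this forces in Lomadze's formula) to see that they recombine so that no isolated $L_{8;4}(n)$ term survives in \eqref{tau-eq}, only the $\tfrac13\mathcal{L}_4(n)$ shown. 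An organization that avoids Lomadze's conventions altogether is to expand every weighted theta series involved in a fixed basis of $S_{12}(\Gamma_0(3))$ (the basis of \S 3.2) together with generators of $S_6(\Gamma_0(3))$ and $S_8(\Gamma_0(3))$, and then to verify \eqref{tau-eq} coefficient by coefficient up to the Sturm bound.
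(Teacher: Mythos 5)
Your proposal is correct and follows essentially the same route as the paper: compare the formula \eqref{24} with Lomadze's formula (XI) for $s_{24}(n)$, cancel the common Eisenstein term $\frac{6552}{73\times 691}\sigma_{11}^*(n)$, substitute $\tau_{6,3}(n)=\frac{1}{12}L_{6;2}(n)$ and $\tau_{8,3}(n)=\frac{1}{108}L_{8;4}(n)$, absorb the two $F_4$-sums into ${\mathcal L}_4(n)$, and solve for $\tau(n)$. The only cosmetic difference is that the paper simply quotes those two identities from Yao--Xia and \cite{ramki1} instead of re-deriving them via the harmonic-theta-series argument you sketch.
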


\begin{proof}
In \cite[formula (XI), p.12]{lomadze}, Lomadze gave the following formula for $s_{24}(n)$:
\begin{equation}\label{L-24}
s_{24}(n) = \frac{1}{73\times 691}\Big({6552} \sigma_{11}^*(n) + \frac{291096}{35} L_{12;8}(n) + {864} L_{12;6}(n) + {360} L_{12;4}(n)\Big),
\end{equation}
where $L_{12;8}(n)$ and $L_{12;6}(n)$ are given by \eqref{12-8} and \eqref{12-6} respectively, and $L_{12;4}(n)$ is given by 
\begin{equation}
L_{12;4}(n) = \sum_{x_1\in {\mathbb Z}\atop{F_{4}(x_1,\ldots,x_{8})=n}} 1215 x_1^8 - 2268 n x_1^6 + 1260 n^2 x_1^4 - 210 n^3 x_1^2 + 5 n^4.
\end{equation}
Comparing this with \eqref{24}, we get the following relation (after cancelling the factor $50443$ in the denominators).
\begin{equation}\label{tau-1}
\begin{split}
&29824 \times 73 \tau(n) + 1186848  \tau_{8,3}(n)  + 261344 \tau_{6,3}(n) \hskip 2cm \\
&\quad + 1186848 \times 240 \sum_{a,b \in {\mathbb N}\atop{ a+b=n}} \sigma_3(a) \tau_{8,3}(b) 
- 261344 \times 504 \sum_{a,b \in {\mathbb N}\atop{ a+b=n}} \sigma_5(a) \tau_{6,3}(b)   \hskip 2cm \\
& \qquad = \frac{291096}{35} L_{12;8}(n) + {864} L_{12;6}(n) + {360} L_{12;4}(n).
\end{split}
\end{equation}
Now, in \cite[Theorem 1.3]{xia}, Yao and Xia showed that 
\begin{equation}\label{tau-2}
\tau_{6,3}(n) =  \frac{1}{12} L_{6;2}(n)
\end{equation}
and in \cite[Theorem 2.3]{ramki1}  the first two authors obtained the following expression for the newform Fourier coefficients $\tau_{8,3}(n)$:
\begin{equation}\label{tau-3}
\tau_{8,3}(n)  =   \frac{1}{108} L_{8;4}(n). 
\end{equation}
Substituting \eqref{tau-2} and \eqref{tau-3} in \eqref{tau-1}, we get the required formula. Note that both the sums $L_{8;4}(n)$ and $L_{12;4}(n)$ involve the solutions of 
the quadratic form $F_4$, and so these are combined to get the expression ${\mathcal L}_4(n)$. 
\end{proof}

In the following corollary, we shall obtain a relation between certain convolution sums in terms of some of the finite sums appearing in Lomadze's formulas as a consequence 
to our formula for $s_{28}(n)$. 

\begin{cor}\label{conv}
For an integer $n\ge 1$, we have the following identity.
\begin{equation}\label{s28-cor}
\begin{split}
73760 \sum_{a,b \in {\mathbb N}\atop{ a+b=n}} \sigma_7(a) L_{6;2}(b) - \frac{194432}{3} \sum_{a,b \in {\mathbb N}\atop{ a+b=n}} \sigma_5(a) L_{8;4}(b) + 60336 \sum_{a,b \in {\mathbb N}\atop{ a+b=n}} \sigma_3(a) L_{10;6}(b)  \hskip 2cm & \\
 \qquad = \frac{461}{3} L_{6;2}(n) - \frac{3472}{27} L_{8;4}(n) - \frac{1257}{5} L_{10;6}(n) + \frac{94477}{735} L_{14;10}(n) \hskip 2cm &\\
 \qquad \qquad + \frac{864}{245} L_{14;8}(n) + \frac{144}{175} L_{14;6}(n). \hskip 6cm &\\
\end{split}
\end{equation}
\end{cor}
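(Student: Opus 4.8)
The plan is to argue exactly as in the proof of Corollary~\ref{tau}, now in weight~$14$. The additional input is Lomadze's formula for $s_{28}(n)$ from \cite{lomadze} (the weight-$14$ analogue of \eqref{L-24}), which expresses $s_{28}(n)$ as $\frac{12}{1093}\sigma_{13}^*(n)$ plus an explicit rational combination of the finite sums $L_{14;10}(n)$, $L_{14;8}(n)$, $L_{14;6}(n)$ over the solutions of $F_{10}$, $F_8$, $F_6$. Setting this equal to our formula \eqref{28}, the Eisenstein terms $\frac{12}{1093}\sigma_{13}^*(n)$ cancel, and one is left with an identity between the cuspidal parts: on one side the newform quantities $\tau(n)$, $\sum_{a+b=n}\sigma(a)\tau(b)-3\sum_{3a+b=n}\sigma(a)\tau(b)$, $\sum_{a+b=n}\sigma_5(a)\tau_{8,3}(b)$ and $\sum_{a+b=n}\sigma_7(a)\tau_{6,3}(b)$, and on the other a rational combination of $L_{14;10}(n)$, $L_{14;8}(n)$, $L_{14;6}(n)$.

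I would then replace every newform coefficient by a finite sum. Using $\tau_{6,3}(n)=\tfrac1{12}L_{6;2}(n)$ from \eqref{tau-2} and $\tau_{8,3}(n)=\tfrac1{108}L_{8;4}(n)$ from \eqref{tau-3}, both directly and inside the convolutions, the sums $\sum_{a+b=n}\sigma_7(a)\tau_{6,3}(b)$ and $\sum_{a+b=n}\sigma_5(a)\tau_{8,3}(b)$ become $\tfrac1{12}\sum_{a+b=n}\sigma_7(a)L_{6;2}(b)$ and $\tfrac1{108}\sum_{a+b=n}\sigma_5(a)L_{8;4}(b)$, producing two of the three divisor-convolutions on the left of \eqref{s28-cor}.

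What remains is the combination $\tau(n)+12\big(\sum_{a+b=n}\sigma(a)\tau(b)-3\sum_{3a+b=n}\sigma(a)\tau(b)\big)$, which is precisely $-\tfrac12$ times the $n$-th Fourier coefficient of the cusp form $\big(E_2(z)-3E_2(3z)\big)\Delta(z)\in S_{14}(3)$. To convert it into finite sums I would use the factorization $\theta_{F_{14}}=\theta_{F_{12}}\,\theta_{F_2}$ (valid since $F_{14}$ is the orthogonal sum of $F_{12}$ and $F_2$) together with Lomadze's formula \eqref{L-24} for $s_{24}$, the classical value $s_4(n)=12\big(\sigma(n)-3\sigma(n/3)\big)$ for $n\ge1$, and once more \eqref{tau-2}, \eqref{tau-3}; equivalently one may substitute the expression for $\tau(n)$ supplied by Corollary~\ref{tau} and evaluate the convolution $\sum_{a+b=n}\sigma(a)\tau(b)$ by the modular-forms argument of Theorem~\ref{s28}, reorganizing the outcome by the recursions among Lomadze's finite sums that come from the decompositions $F_l=F_{l'}+F_{l-l'}$. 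This is the step that brings in the term $L_{10;6}(n)$ and the convolution $\sum_{a+b=n}\sigma_3(a)L_{10;6}(b)$, the standalone terms $L_{6;2}(n)$, $L_{8;4}(n)$, and the remaining contributions to the two convolutions already introduced. Finally, with every cuspidal quantity expressed through $L$-sums and divisor-convolutions of $L$-sums, I would collect like terms --- the three divisor-convolutions on the left, the pure $L$-sums on the right --- and clear the common denominator; comparison of coefficients then gives \eqref{s28-cor}.

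The main obstacle is this last conversion: unlike $\tau_{6,3}$ and $\tau_{8,3}$, there is no ready-made Lomadze-type identity for the weight-$14$ cusp form $\big(E_2(z)-3E_2(3z)\big)\Delta(z)$ (equivalently for the combination $\tau(n)+12(\cdots)$), so it must be produced from the factorization $\theta_{F_{14}}=\theta_{F_{12}}\,\theta_{F_2}$ or from Corollary~\ref{tau}, and one then has to verify that the fairly large rational constants arising along the way coalesce into exactly the coefficients displayed in \eqref{s28-cor}. The rest is routine bookkeeping of rational numbers.
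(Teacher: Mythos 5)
Your opening is the same as the paper's: compare \eqref{28} with Lomadze's formula \eqref{L-28}, cancel the $\frac{12}{1093}\sigma_{13}^*(n)$ terms, and convert $\tau_{6,3}$ and $\tau_{8,3}$ (both standalone and inside the convolutions) via \eqref{tau-2} and \eqref{tau-3}. You also correctly identify the real difficulty: the combination $\tau(n)+12\big(\sum_{a+b=n}\sigma(a)\tau(b)-3\sum_{3a+b=n}\sigma(a)\tau(b)\big)$, i.e.\ the coefficient of the cusp form $\big(3E_2(3z)-E_2(z)\big)\Delta(z)$. But at exactly that point your proposal stops being a proof: you flag the conversion of this quantity into $L$-sums as ``the main obstacle'' and offer only unworked alternatives. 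The paper resolves it by three concrete inputs you do not supply: (i) Ramanujan's identity $\sum_{a+b=n}\sigma(a)\tau(b)=\frac{1-n}{24}\tau(n)$ \eqref{tau1}; (ii) the convolution identity \eqref{tau2} for $\sum_{3a+b=n}\sigma(a)\tau(b)$, obtained by expanding the quasimodular form $E_2(3z)\Delta(z)$ in weight $14$, level $3$, which brings in $\tau_{6,3}$, $\tau_{8,3}$, $\tau_{10,3;2}$ and the convolutions $\sigma_7*\tau_{6,3}$, $\sigma_5*\tau_{8,3}$, $\sigma_3*\tau_{10,3;2}$ --- and, crucially, when combined with (i) makes every pure $\tau(n)$ term cancel; and (iii) the identity $\tau_{10,3;2}(n)=\frac{1}{120}L_{10;6}(n)$ from \cite[Corollary 2.5]{ramki2} (with the corrected coefficient $21$), which is precisely how $L_{10;6}(n)$ and $\sum_{a+b=n}\sigma_3(a)L_{10;6}(b)$ enter \eqref{s28-cor}.

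Neither of your suggested substitutes closes this gap. The route via $\theta_{F_{14}}=\theta_{F_{12}}\theta_{F_2}$ together with Lomadze's $s_{24}$-formula and $s_4(n)$ would produce convolutions of divisor sums with $L_{12;8}$, $L_{12;6}$, $L_{12;4}$, not the terms appearing in \eqref{s28-cor}; the route via Corollary~\ref{tau} likewise injects $L_{12;8}$, $L_{12;6}$, ${\mathcal L}_4$ and the convolutions $\sigma_3*L_{8;4}$, $\sigma_5*L_{6;2}$, none of which occur in the target identity, so they would have to be eliminated by the ``recursions among Lomadze's finite sums'' you invoke --- but no such recursions are established in the paper (or sketched by you), and it is not clear they exist in the required form, since the $L$-sums are weighted by polynomials in $x_1$ and $n$ rather than being plain representation numbers. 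So the proposal is an accurate outline of the first half of the argument, but the decisive step --- identities (i)--(iii) above, in particular the evaluation of $\sum_{3a+b=n}\sigma(a)\tau(b)$ and the appearance of $\tau_{10,3;2}$ --- is missing, and the identity \eqref{s28-cor} cannot be reached from what you have written down.
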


\begin{proof}
The following is the formula for $s_{28}(n)$ obtained by Lomadze \cite[formula (XIII), p13]{lomadze}: 
\begin{equation}\label{L-28}
s_{28}(n) = \frac{12}{1093} \sigma_{13}^*(n) + \frac{188954}{803355} L_{14;10}(n) + \frac{1728}{267785} L_{14;8}(n) + \frac{288}{191275} L_{14;6}(n),
\end{equation}
where 
\begin{eqnarray}
L_{14;10}(n) & = & \!\!\!\sum_{x_1\in {\mathbb Z}\atop{F_{10}(x_1,\ldots,x_{20})=n}} \!\!\!\!\!
(99x_1^4 - 33 n x_1^2 + n^2), \label{14-10}\\
L_{14;8}(n) & = & \!\!\!\sum_{x_1\in {\mathbb Z}\atop{F_{8}(x_1,\ldots,x_{16})=n}} \!\!\!\!\! (594 x_1^6 - 495 n x_1^4 + 90n^2x_1^2 - 2n^3),\label{14-8} \\
L_{14;6}(n) & = & \!\!\!\sum_{x_1\in {\mathbb Z}\atop{F_{6}(x_1,\ldots,x_{12})=n}}\!\! \!\!\!(8019 x_1^8 - 12474 n x_1^6 + 5670n^2x_1^4 - 756 n^3 x_1^2+ 14n^4). \label{14-6} 
\end{eqnarray}
Comparing this with \eqref{28}, we get the following.
\begin{equation}\label{new-1}
\begin{split}
107264 \big[\tau(n)  + 12 \sum_{a,b \in {\mathbb N}, a+b=n}\sigma(a) \tau(b) - 36 \sum_{a,b \in {\mathbb N}, 3a+b=n}\sigma(a) \tau(b)\big] - 12448 \tau_{8,3}(n) \qquad \qquad &\\
- 3016 \tau_{6,3}(n) + 6273792 \sum_{a,b \in {\mathbb N}, a+b=n}\sigma_5(a) \tau_{8,3}(b) - 1447680 
\sum_{a,b \in {\mathbb N}, a+b=n}\sigma_7(a) \tau_{6,3}(b)  \qquad &\\
~=~\quad \frac{188954}{735} L_{14;10}(n) + \frac{1728}{245} L_{14;8}(n) + \frac{288}{175} L_{14;6}(n).\hskip 3cm  &
\end{split}
\end{equation}
The following is a well-known convolution sum of $\sigma(n)$ with $\tau(n)$ due to Ramanujan \cite[Eq.(99)]{ramanujan}: 
\begin{equation}\label{tau1}
\sum_{a,b \in {\mathbb N}, a+b=n} \sigma(a) \tau(b) = \frac{1}{24}(1-n) \tau(n).
\end{equation}
Considering the quasimodular form $E_2(3z) \Delta(z)$, we obtain the following convolution sum: 
\begin{equation}\label{tau2}
\begin{split}
\sum_{a,b \in {\mathbb N}, 3a+b=n} \sigma(a) \tau(b) & = \frac{(3 - n)}{72} \tau(n) -\frac{1}{576} \tau_{6,3}(n) -\frac{1}{96}\tau_{8,3}(n) - \frac{1}{64} \tau_{10,3;2}(n) \\
& ~~- \frac{5}{6} \sum_{a,b \in {\mathbb N}\atop{ a+b=n}} \!\!\!\!\sigma_7(a) \tau_{6,3}(b) + \frac{21}{4} \sum_{a,b \in {\mathbb N}\atop {a+b=n}} \!\!\!\!\sigma_5(a) \tau_{8,3}(b) - \frac{15}{4} \sum_{a,b \in {\mathbb N} \atop{a+b=n}} \!\!\!\!\sigma_3(a) \tau_{10,3;2}(b).  
\end{split}
\end{equation}
In \cite[Corollary 2.5]{ramki2}, the first two authors obtained an expression for the Fourier coefficients of the newform $\Delta_{10,3;2}(z)$, which is given below. 
\begin{eqnarray}
\tau_{10,3;2}(n) &=& \frac{1}{120} \sum_{x_1\in {\mathbb Z}\atop{F_6(x_1, \ldots, x_{12})=n}} 
(42 x_1^4 - 21n x_1^2 + n^2) =: \frac{1}{120} L_{10;6}(n).\label{10-3}
\end{eqnarray}
Substituting the expressions \eqref{tau1}, \eqref{tau2} in \eqref{new-1}, we get 
\begin{equation}
\begin{split}
1844 \tau_{6,3}(n) + 13888 \tau_{8,3}(n) + 30168 \tau_{10,3;2}(n) + 885120 \sum_{a,b \in {\mathbb N}, a+b=n}\sigma_7(a) \tau_{6,3}(b) &\\
\qquad - 1999552 \sum_{a,b \in {\mathbb N}, a+b=n}\sigma_5(a) \tau_{8,3}(b) + 7240320 \sum_{a,b \in {\mathbb N}, a+b=n}\sigma_3(a) \tau_{10,3;2}(b) \hskip 1cm &\\
\qquad \quad =  \frac{94477}{735} L_{14;10}(n) + \frac{864}{245} L_{14;8}(n) + \frac{144}{175} L_{14;6}(n).\hskip 1cm  &\\
\end{split}
\end{equation}
Now using the expressions \eqref{tau-2}, \eqref{tau-3} and \eqref{10-3} in the above and simplifying, we get the required formula.
\end{proof}

\begin{rmk}
{\rm We note that as done in \corref{tau}, one can combine the sums $L_{10;6}(n)$ and $L_{14;6}(n)$  in the identity \eqref{s28-cor}.
We also note that the sum that appears in \eqref{10-3} is the correct one. In Lomadze's formula \cite[formula (IX), p. 12]{lomadze}, the coefficient of $n x_1^2$ is 
wrongly mentioned as $27$ instead of $21$. The same mistake also appears in \cite[Corollary 2.5]{ramki2}.
}
\end{rmk}

\section{Proofs of theorems}

\subsection{Proof of \thmref{odd}}

We denote the theta series associated to the quadratic form $F_1: x_1^2+x_1x_2+x_2^2$ by 
\begin{equation}\label{F1}
{\mathcal F_1}(z) = \sum_{x_1,x_2\in {\mathbb Z}} q^{x_1^2+x_1x_2+x_2^2}.
\end{equation}
By \cite[Theorem 4]{schoeneberg}, it follows that ${\mathcal F_1}(z)$ is a modular form in 
$M_1(\Gamma_0(3),\chi_{-3})$. 
Note that the modular form ${\mathcal F}_k$ associated to the quadratic form $F_k$ is nothing but ${\mathcal F}_1^k$. Therefore, by the definition of $s_{2k}(n)$ (Eq.\eqref{s2k}), we have  
\begin{equation*}
{\mathcal F}_k(z) = {\mathcal F}_1^k(z) = \sum_{n\ge 0} s_{2k}(n) q^n.
\end{equation*}
So, in order to get the required formulas $s_{2k}(n)$, $k=7,9,11$, we need to find explicit bases for the vector spaces $M_k(\Gamma_0(3),\chi_{-3})$ for 
$k=7,9,11$. \\
\noindent Case (i): $k=7$.  
A basis for the $3$-dimensional vector space $M_7(\Gamma_0(3), \chi_{-3})$ is given by $E_{7, {\bf 1},\chi_{-3}}(z)$, 
$E_{7, \chi_{-3}, {\bf 1}}(z)$ and the unique normalized newform $\Delta_{7,3,\chi_{-3}}(z)$ is given by 
$$
\Delta_{7,3,\chi_{-3}}(z) = \sum_{n\ge 1} \tau_{7,3,\chi_{-3}}(n) q^n ~=~ \big(E_4(z) - E_4(3z)\big) \eta^9(z) \eta^{-3}(3z).  
$$
By comparing the first few Fourier coefficients (using the Sturm bound), it follows that 
$$
{\mathcal F}_7(z) = \frac{81}{7} E_{7,{\bf 1},\chi_{-3}}(z) - \frac{3}{7} E_{7, \chi_{-3}, {\bf 1}}(z) + \frac{216}{7} \Delta_{7,3,\chi_{-3}}(z),
$$
from which we get the formula for $s_{14}(n)$ given below. 
$$
s_{14}(n) = \frac{3}{7} \rho_6^*(n) + \frac{216}{7} \tau_{7,3,\chi_{-3}}(n).
$$

\noindent Case (ii): $k=9$.  
In this case,  the dimension of the corresponding modular forms space is $4$. A basis for the $4$-dimensional vector space $M_9(\Gamma_0(3), \chi_{-3})$ is given by $E_{9, {\bf 1},\chi_{-3}}(z)$, $E_{9, \chi_{-3}, {\bf 1}}(z)$, $\Delta_{9,3,\chi_{-3};j}(z)$, $j=1,2$, where 
\begin{equation*}
\begin{split}
\Delta_{9,3,\chi_{-3};1}(z) := \sum_{n\ge 1} \tau_{9,3,\chi_{-3};1}(n) q^n ~= ~\eta^3(z) \eta^{15}(3z), \\
\Delta_{9,3,\chi_{-3};2}(z) := \sum_{n\ge 1} \tau_{9,3,\chi_{-3};2}(n) q^n ~= ~\eta^{15}(z) \eta^{3}(3z).
\end{split}
\end{equation*} 
As in the previous case, by comparing the first few Fourier coefficients, it follows that 
$$
{\mathcal F}_9(z) = \frac{2187}{809} E_{9,{\bf 1},\chi_{-3}}(z) + \frac{27}{809} E_{9, \chi_{-3}, {\bf 1}}(z) + \frac{1119744}{809} \Delta_{9,3,\chi_{-3};1}(z) + \frac{41472}{809} \Delta_{9,3,\chi_{-3};2}(z)
$$
from which we get the required formula for $s_{18}(n)$.\\

\noindent Case (iii): $k=11$.  
In this case the dimension of the corresponding modular forms space is $4$. A basis for the $4$-dimensional vector space $M_{11}(\Gamma_0(3), \chi_{-3})$ is given by $E_{11, {\bf 1},\chi_{-3}}(z)$, $E_{11, \chi_{-3}, {\bf 1}}(z)$, $\Delta_{11,3,\chi_{-3};j}(z)$, $j=1,2$, where 
\begin{equation*}
\begin{split}
\Delta_{11,3,\chi_{-3};1}(z) := \sum_{n\ge 1} \tau_{11,3,\chi_{-3};1}(n) q^n ~= ~E_4(z)  \Delta_{7,3,\chi_{-3}}(z), \\
\Delta_{11,3,\chi_{-3};2}(z) := \sum_{n\ge 1} \tau_{11,3,\chi_{-3};2}(n) q^n ~= ~E_4(3z)  \Delta_{7,3,\chi_{-3}}(z). \\
\end{split}
\end{equation*} 
As before, by comparing the first few Fourier coefficients, it follows that 
$$
{\mathcal F}_{11}(z) = \frac{729}{1847} E_{11,{\bf 1},\chi_{-3}}(z) - \frac{3}{1847} E_{11, \chi_{-3}, {\bf 1}}(z) + \frac{60588}{9235} \Delta_{11,3,\chi_{-3};1}(z) + \frac{545292}{9235} \Delta_{11,3,\chi_{-3};2}(z)
$$
from which we get the required formula for $s_{22}(n)$. 
This completes the proof. 

\subsection{Proof of \thmref{s28}}
We consider the two cases $k=12$ and $k=14$ separately. 

\noindent {\bf The case $k=12$}. 
In this case, we have 
\begin{equation*}
{\mathcal F}_{12}(z)  = \sum_{n\ge 1} s_{24}(n) q^n,
\end{equation*}
and the function ${\mathcal F}_{12}(z)$ belongs to $M_{12}(3)$, which has dimension $5$. 
The following modular forms constitute a basis of $M_{12}(3)$:
\begin{equation}
\left\{E_{12}(z), E_{12}(3z), \Delta(z), E_4(z) \Delta_{8,3}(z), E_6(z) \Delta_{6,3}(z)\right\},
\end{equation}
where 
\begin{eqnarray}
\Delta_{8,3}(z) & =& \sum_{n\ge 1} \tau_{8,3}(n) q^n \nonumber \\
&=& \eta^{12}(z)\eta^4(3z) + 81\eta^6(z)\eta^4(3z)\eta^6(9z)+
18\eta^9(z)\eta^4(3z)\eta^3(9z),\\
\Delta_{6,3}(z) &=& \sum_{n\ge 1} \tau_{6,3}(n) q^n ~=~ \eta^6(z) \eta^6(3z).
\end{eqnarray}
Using the above basis, we have 
\begin{equation}
\begin{split}
{\mathcal F}_{12}(z) & = \frac{1}{730} E_{12}(z) + \frac{729}{730} E_{12}(3z) + \frac{29824}{691} \Delta(z) \\
& \qquad +\frac{1186848}{50443} E_4(z) \Delta_{8,3}(z) + \frac{261344}{50443} E_6(z) \Delta_{6,3}(z).\\
\end{split}
\end{equation}
Comparing the $n$-th Fourier coefficients both the sides, and simplifying, we get the required formula \eqref{24}.

\noindent {\bf The case $k=14$}. 
To get the required expression for $s_{28}(n)$, we need to consider the function 
${\mathcal F}_{14}(z)$ which is a modular form of weight  $14$ for the group 
$\Gamma_0(3)$. The vector space $M_{14}(3)$ is $5$-dimensional and we consider the following basis.
$$
\left\{E_{14}(z), E_{14}(3z), E_8(z) \Delta_{6,3}(z), 
E_6(z) \Delta_{8,3}(z),  \frac{1}{2}\big(3 E_2(3z) - E_2(z)\big) \Delta(z)\right\},
$$ 
where the forms $\Delta_{8,3}(z)$ and $\Delta_{6,3}(z)$ are as defined in the previous case. 
Now, expressing the modular form ${\mathcal F}_{14}(z)$ in terms of the basis elements, we get 
\begin{equation*}
\begin{split}
{\mathcal F}_{14}(z) & = -\frac{1}{2186} E_{14}(z) + \frac{2187}{2186} E_{14}(3z) - \frac{3016}{1093} E_8(z) \Delta_{6,3}(z)\\
& \qquad - \frac{12448}{1093} E_6(z) \Delta_{8,3}(z) + \frac{53632}{1093} \big(3 E_2(3z) - E_2(z)\big) \Delta(z). 
\end{split}
\end{equation*}
The $n$-th Fourier coefficient of the LHS is $s_{28}(n)$ and therefore, by comparing the 
$n$-th Fourier coefficients both the sides, we get the required formula \eqref{28}.

\section*{Acknowledgements}
We have used the open-source mathematics software SAGE (www.sagemath.org) to do our calculations. 
The second author is partially funded by SERB grant SR/FTP/MS-053/2012. He would like to thank HRI, 
Allahabad for the warm hospitality where this work has been carried out.

\end{document}